\title{Rigidity of pseudo-free group actions\\ on contractible manifolds}
\author{Qayum \textsc{Khan}\footnote{Department of Mathematics, University of Notre Dame du Lac, Notre Dame IN 46556 U.S.A.\newline e-mail: \texttt{qkhan@nd.edu}}}
\newtheorem{thm}{Theorem}[section]
\newtheorem{crl}[thm]{Corollary}
\newtheorem{lmm}[thm]{Lemma}
\theoremstyle{definition}
\newtheorem*{undefn}{Definition} 
\newtheorem{exa}[thm]{Example}
\theoremstyle{remark}
\DeclareMathAlphabet{\matheurm}{U}{eur}{m}{n}
\newcommand{\C}{\mathbb{C}}
\newcommand{\bbE}{\mathbb{E}}
\newcommand{\bbH}{\mathbb{H}}
\newcommand{\R}{\mathbb{R}}
\newcommand{\Z}{\mathbb{Z}}
\newcommand{\G}{\Gamma}
\newcommand{\cF}{\mathcal{F}}
\newcommand{\cG}{\mathcal{G}}
\newcommand{\fS}{\mathscr{S}}
\newcommand{\fbc}{\matheurm{fbc}}
\newcommand{\fin}{\matheurm{fin}}
\newcommand{\midd}{\matheurm{mid}}
\newcommand{\vc}{\matheurm{vc}}
\newcommand{\CAT}{\mathrm{CAT}}
\newcommand{\Hei}{\mathrm{Hei}}
\newcommand{\Stab}{\mathrm{Stab}}
\newcommand{\UNil}{\mathrm{UNil}}
\newcommand{\vcd}{\mathrm{vcd}}
\newcommand{\bdry}{\partial}
\newcommand{\vare}{\varepsilon}
\newcommand{\wt}[1]{\widetilde{#1}}
\begin{document}
\maketitle

In this article, we announce joint work with Frank Connolly and Jim Davis \cite{CDK2}.
This follows an earlier case study of pseudo-free involutions on the $n$-torus carried out in \cite{CDK}.  The author is grateful to the organizers of the RIMS conferences where these results were disseminated in Asia: \emph{Transformation Groups and Surgery Theory} (Masayuki Yamasaki, August 2010), \emph{Transformation Groups and Combinatorics} (Mikiya Masuda, June 2011).

\section{History and the Main Theorem}

\begin{undefn}
Let $\cF \subset \cG$ be families of subgroups of a group $\G$.
We say that $\G$ satisfies \emph{Property $C_{\cF \subset \cG}$} if every element $H \in \cG - \cF$ has its centralizer $C_\G(H)$ in $\cG$.
One says that $\G$ satisfies \emph{Property $M_{\cF \subset \cG}$} if every element $H \in \cG - \cF$ is contained in a unique maximal element $H_{max}$ of $\cG$.
Furthermore, one says that $\G$ satisfies \emph{Property $NM_{\cF \subset \cG}$} if $\G$ satisfies $M_{\cF \subset \cG}$ and each $H_{max}$ is self-normalizing in $\G$.
\end{undefn}

Below we consider the increasing chain $\{1\} \subset \fin \subset \fbc \subset \vc$ of families, where $\{1\}$ consists of the trivial subgroup, $\fin$ consists of the finite subgroups, $\fbc$ consists of the finite-by-cyclic subgroups, and $\vc$ consists of the virtually cyclic subgroups.

\begin{undefn}
Let $\G$ be a group. We define $\fS(\G)$ as the set of $\G$-homeomorphism classes of contractible manifolds equipped with an effective cocompact proper $\G$-action.
\end{undefn}

For any $\G$-space $X$, consider the free part of the action:
\[
X_{free} ~:=~ \{ x \in X \mid gx = x \text{ implies } g=1 \in \G \}.
\]

Our Main Theorem parameterizes $\fS(\G)$, and determines when it is one element.

\pagebreak
\begin{thm}[Main Theorem]\label{thm:mainQ}
Let $\G$ be a group. Assume:
\begin{enumerate}
\item $\G$ satisfies Property $C_{\{1\} \subset \fin}$,
\item $\G$ satisfies Property $M_{\fbc \subset \vc}$,
\item $\G$ is virtually torsion-free with $n := \vcd(\G) > 4$,
\item there exists $[X,\G] \in \fS(\G)$ where $X_{free}/\G$ has the homotopy type of a finite complex,
\item $\G$ satisfies the Farrell--Jones Conjecture in lower $K$-theory and in $L$-theory.
\end{enumerate}
Write $\vare := (-1)^n$.
There is a bijection of sets, with $0 \mapsto [X,\G]$, given by Wall realization:
\begin{equation}\label{eqn:main}
\bigoplus_{(\midd)(\G)} \UNil_{n+\vare}(\Z; \Z, \Z) ~\xrightarrow{~\approx~}~  \fS(\G).
\end{equation}
Here $(\midd)(\G)$ is the set of conjugacy classes of maximal infinite dihedral subgroups  of $\G$.\linebreak
Furthermore, each element of $\fS(\G)$ has a locally conelike representative with the same $\G$-homeomorphism type of links of singularities.

In particular, if $n\equiv 0,1 \pmod{4}$, or if $\G$ has no element of order two, then $\fS(\G)$ has only one element.
In this case, for any cocompact $\G$-manifold $M$, every $\G$-homotopy equivalence $f: M \to X$ is $\G$-homotopic to a $\G$-homeomorphism.
\end{thm}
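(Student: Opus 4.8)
The plan is to identify $\fS(\G)$ with a stratified topological structure set and to evaluate it through a surgery exact sequence whose only nonrigid input is the Cappell $\UNil$ term. First I would observe that every class in $\fS(\G)$ is $\G$-homotopy equivalent to $X$: each representative is a contractible proper cocompact $\G$-manifold, and Property $C_{\{1\}\subset\fin}$ forces the centralizer of each nontrivial finite subgroup to be finite, so the singular set is discrete modulo $\G$ and each fixed set $X^H$ (for $H$ finite) is a single point. Thus $X$ models $\ul{E}\G$ and $\fS(\G)$ becomes the isovariant structure set of $X$, pointed by $[X,\G]$ through hypothesis (4); the finiteness of $X_{free}/\G$ supplies a finite model of the free stratum, and $n=\vcd(\G)>4$ puts us in the surgery range where the $\pi$-$\pi$ theorem and $s$-cobordism apply. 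Pseudo-freeness leaves a single open free stratum together with the discrete singular set, each singular point having an equivariant cone neighborhood; hence a locally conelike representative exists, its link being the $\Stab(x)$-boundary of that cone. By the Farrell--Jones conjecture (5) in lower $K$-theory, combined with Property $C$, the $\Wh$ and $\wt{K}_0$ obstructions localized at the finite isotropy vanish, so the links are rigid with a common $\G$-homeomorphism type and the classification reduces to a relative $L$-theoretic term.

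The heart of the argument is the evaluation of that relative $L$-group. Using Farrell--Jones in $L$-theory, I would replace the structure set by the equivariant $L$-homology assembly over the family $\vc$ and compare it with the assembly over $\fbc$. The subgroups in $\vc-\fbc$ are exactly the virtually cyclic groups of dihedral type, that is, those surjecting onto the infinite dihedral group $D_\infty$; by Property $M_{\fbc\subset\vc}$ each lies in a unique maximal infinite dihedral subgroup, so the relative term is indexed by the conjugacy classes $(\midd)(\G)$. For each such subgroup, Cappell's theorem identifies the $L$-theory of $\Z[D_\infty]$ relative to its finite subgroups with $\UNil_*(\Z;\Z,\Z)$, and a degree count in the contractible $n$-manifold places the obstruction in degree $n+\vare$. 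Summing over conjugacy classes yields the left side of \eqref{eqn:main}, whose inverse is Wall realization: each $\UNil$ class is realized geometrically as a variation of the normal structure along the corresponding dihedral axis. Exactness of the surgery sequence, with the normal-invariant terms killed above, then gives the bijection.

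The principal obstacle is not this computation but the isolation step: showing that everything away from the dihedral loci is rigid, i.e.\ that the stratified surgery sequence is exact and that the assembly over $\fbc$ --- incorporating the free stratum together with the finite and finite-by-cyclic isotropy --- is an isomorphism on structure sets. This demands a careful interplay of hypotheses (1), (3), and (5), and it is here that the passage between the geometric set $\fS(\G)$ and its algebraic surgery model is most delicate; controlling the ends near the singular points, so that stratified transversality and the $\pi$-$\pi$ theorem apply there, is the technical core.

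Finally, the closing assertions follow by evaluating the summand. If $\G$ has no element of order two it contains no infinite dihedral subgroup, so $(\midd)(\G)=\varnothing$ and the sum is empty. Otherwise, by Cappell's (and Connolly--Ranicki's) calculation, $\UNil_{n+\vare}(\Z;\Z,\Z)=0$ exactly when $n+\vare\equiv 0,1 \pmod 4$, which since $\vare=(-1)^n$ is precisely $n\equiv 0,1\pmod 4$. In either case the domain of \eqref{eqn:main} is trivial, so $\fS(\G)=\{[X,\G]\}$; any cocompact $\G$-manifold $M$ admitting a $\G$-homotopy equivalence $f:M\to X$ then represents this single class, and therefore $f$ is $\G$-homotopic to a $\G$-homeomorphism.
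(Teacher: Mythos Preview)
The paper does not actually prove this theorem: it is an announcement, and immediately after the statement it says ``For the proof, see the full article \cite{CDK2}.'' The only methodological hints the paper gives are that Smith theory is used to deduce isolated fixed points from Hypothesis~(1), and that Siebenmann's end theory is used to obtain the locally conelike structure from Hypothesis~(4). So there is no full proof here to compare against, only these two signposts.

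Your outline is broadly in line with those signposts and with what one expects: you correctly route the classification through an isovariant/stratified structure set, invoke Farrell--Jones in $L$-theory to pass to an assembly map, use Property~$M_{\fbc\subset\vc}$ to index the relative term by maximal infinite dihedral subgroups, identify each summand with Cappell's $\UNil$, and you correctly locate the delicate point as the rigidity of everything away from the dihedral loci. Your closing paragraph deducing the vanishing cases is fine.

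The one place where your sketch is thinner than the paper's hints is the locally conelike step. You write that pseudo-freeness leaves isolated singular points ``each singular point having an equivariant cone neighborhood; hence a locally conelike representative exists,'' but for a merely topological proper action this is not automatic: an isolated fixed point of a finite group on a topological manifold need not have an equivariant cone neighborhood. One must analyze the $\Stab(x)$-equivariant end of $X\setminus\{x\}$ and show it is collared; this is exactly where Siebenmann's end theory enters, and it is why Hypotheses~(3) and~(4) (high dimension and finite homotopy type of the free quotient) are needed at that stage. The paper singles this out explicitly. Similarly, your passage from Property~$C_{\{1\}\subset\fin}$ to ``each fixed set $X^H$ is a single point'' tacitly uses Smith theory (mod~$p$ acyclicity of $X^H$ for $H$ a $p$-group, combined with compactness of $X^H$ coming from cocompactness of the finite normalizer action); the paper names Smith theory here, and it is worth making that input explicit.
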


For the proof, see the full article \cite{CDK2}. Notably, for the topological actions $\G \curvearrowright M$, Smith theory was used to get isolated fixed points from Hypothesis~(1), and Siebenmann theory was used to conclude the action must be locally conelike from Hypothesis~(4).

The vanishing result of the last paragraph of Theorem~\ref{thm:mainQ} is immediate from the following calculation \cite{CR, CD} of the Cappell groups that occur as the summands in \eqref{eqn:main}.

\begin{thm}[Connolly--Davis--Ranicki]
Let $n$ be an integer. Set $\vare := (-1)^n$. Then there is an isomorphism of abelian groups:
\[
\UNil_{n+\vare}(\Z; \Z, \Z) ~\cong~
\begin{cases}
0 & \text{if } n \equiv 0 \pmod{4}\\
0 & \text{if } n \equiv 1 \pmod{4}\\
(\Z/2)^\infty \oplus (\Z/4)^\infty & \text{if } n \equiv 2 \pmod{4}\\
(\Z/2)^\infty & \text{if } n \equiv 3 \pmod{4}.
\end{cases}
\]
\end{thm}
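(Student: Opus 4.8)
\medskip
\noindent\emph{Proof proposal.}
Recall that $\UNil_*(\Z;\Z,\Z)$ is Cappell's unitary nilpotent group measuring the failure of the Mayer--Vietoris sequence in quadratic $L$-theory for the amalgam $\Z/2 * \Z/2 = D_\infty$ over the trivial subgroup, with trivial orientation characters. The plan, following Connolly--Ranicki \cite{CR}, is to replace the geometric definition of $\UNil$ by an algebraic model, reduce that model by d\'evissage to a Witt group of linking forms and formations over a one-variable polynomial ring, and then compute the latter monomial by monomial.

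First, by Cappell's structural results $\UNil_*(\Z;\Z,\Z)$ is $4$-periodic in $*$ and is $2$-primary, since $\UNil_*(\Z;\Z,\Z)\otimes\Z[1/2]=0$. So it suffices to compute, after inverting odd primes, the four groups $\UNil_m(\Z;\Z,\Z)$ with $m\equiv 0,1,2,3\pmod 4$, and then substitute $m=n+(-1)^n$.

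The technical core is the reduction. Ranicki's algebraic model expresses $\UNil_m(\Z;\Z,\Z)$ as the quadratic $L_m$-group of the category of finitely generated free $\Z$-modules equipped with a nilpotent endomorphism, taken relative to the subcategory with zero endomorphism; combining this with algebraic surgery below the middle dimension, one reduces $\UNil_m$ ($2$-locally) to a Witt-type group of $(-1)^m$-quadratic linking forms (for $m$ even) or linking formations (for $m$ odd) on $\Z[x]$-modules that are finite $2$-groups --- ultimately, linking forms over the rings $\Z/2^k[x]$. D\'evissage along the powers $x^j$, $j\ge 1$, then splits this group as a direct sum of ``homogeneous'' pieces, one per $j$, each computable by hand as the Witt group of a single linking pairing or formation on a cyclic $2$-group; the infinite generation of $\UNil_*$ is exactly the infinitude of the index set. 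Assembling the pieces gives $\UNil_m(\Z;\Z,\Z)$ equal to $0$ for $m\equiv 0,1$, to $(\Z/2)^\infty$ for $m\equiv 2$, and to $(\Z/2)^\infty\oplus(\Z/4)^\infty$ for $m\equiv 3$, with the $\Z/2$- and $\Z/4$-pieces in the last case distributed by an explicit congruence condition on $j$. Substituting $m=n+(-1)^n$ reproduces the table in the statement.

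The hard part will be the reduction itself: rigorously replacing the abstract $\UNil$ by this hands-on linking-form computation needs the full Cappell--Ranicki apparatus --- the algebraic model for $\UNil$, the verification that the ``Nil'' category admits d\'evissage so that the Witt group genuinely breaks up along the $x^j$, and the exactness of a Connolly--Ranicki-type localization sequence tying together $\UNil_*(\Z;\Z,\Z)$, $L_*(\Z)$ (with $L_0=\Z$, $L_1=0$, $L_2=\Z/2$, $L_3=0$), and $L_*$ of $\Z[x]$ with a twisted involution. A second genuine difficulty is sharpness of the generation statement: one must write down explicit linking forms realizing each $\Z/2$ and each $\Z/4$ summand and prove their $\Z$-linear independence. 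This is precisely where the $\Z/4$'s are manifested --- via a Gauss-sum/discriminant invariant of a linking pairing that attains a value of order exactly $4$ --- and where one certifies that no summand larger than $\Z/4$ can occur, which is what finally pins the exponent of $\UNil_*(\Z;\Z,\Z)$ to $4$.
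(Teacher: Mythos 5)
The paper does not prove this theorem; it quotes the computation and attributes it to \cite{CR} and \cite{CD}, so there is no internal proof to match your argument against. That said, your outline is a faithful description of what those two references actually do: Cappell's $4$-periodicity and $2$-primary reductions, the Connolly--Ranicki identification of $\UNil_*(\Z;\Z,\Z)$ with reduced $L$-groups of a polynomial extension of $\Z$, the localization/d\'evissage passage to Witt groups of quadratic linking forms and formations on finite $2$-torsion $\Z[x]$-modules, and the Connolly--Davis generator-by-generator analysis producing the $\Z/2$ and $\Z/4$ summands. Your degree bookkeeping is also correct: the underlying computation is $\UNil_0=\UNil_1=0$, $\UNil_2\cong(\Z/2)^\infty$, $\UNil_3\cong(\Z/2)^\infty\oplus(\Z/4)^\infty$, and substituting $m=n+(-1)^n$ reproduces the stated table. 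Two caveats. First, what you have written is a road map, not a proof: every load-bearing step (exactness of the localization sequence, the d\'evissage splitting along powers of $x$, realization and independence of explicit generators, the exponent-$4$ bound) is named and then deferred to ``the full Cappell--Ranicki apparatus,'' which is precisely the content of the two cited papers; so your proposal justifies the statement only at the same citation level as the paper itself. Second, a parity slip: in Ranicki's localization sequence the linking-\emph{form} Witt groups sit in even torsion $L$-degree and map to the \emph{odd} $L$-groups of the ring, so it is $\UNil_3$ that is computed by quadratic linking forms (and carries the $\Z/4$'s via the discriminant-type invariant), while $\UNil_2$ is handled by formations or, more directly, by the Arf-invariant argument; your assignment of forms to even $m$ and formations to odd $m$ is reversed from the standard convention.
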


The parameterization of \eqref{eqn:main} is achieved away from the singularities by a smooth handle construction, where gluing instructions are given by generalized Arf invariants.

In Section~\ref{sec:cor}, we show that the above five properties are satisfied by certain actions on $\CAT(0)$ manifolds.
In Section~\ref{sec:exm}, we provide a family of exotic $\CAT(0)$ examples which cannot come from a Riemannian manifold of nonpositive sectional curvatures.

\section{Geometric consequences}\label{sec:cor}

\begin{undefn}
A proper action $\G \curvearrowright X$ is \emph{pseudo-free} if the singular set is discrete:
\[
X_{sing} ~:=~ \{ x \in X \mid gx=x \text{ for some } g \neq 1 \in \G \}.
\]
\end{undefn}

Theorem~\ref{thm:mainQ} was originally established in \cite{CDK} for the special case of the family of crystallographic groups $\G=\Z^n \rtimes_{-1} C_2$ for all $n > 3$. More generally, we conclude:

\begin{crl}\label{cor:EH}
Let $\G$ be a pseudo-free, cocompact, discrete group of isometries of Euclidean space $\bbE^n$ or hyperbolic space $\bbH^n$ with $n > 4$.
The bijection \eqref{eqn:main} holds.
\end{crl}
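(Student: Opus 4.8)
\emph{Setup.} The plan is to take for $X$ the given model space --- $\bbE^n$ or $\bbH^n$ with its $\G$-action --- and to verify Hypotheses (1)--(5) of Theorem~\ref{thm:mainQ}; the bijection \eqref{eqn:main} then follows verbatim from that theorem. Note that $X$ is a complete $\CAT(0)$ space ($\CAT(-1)$ in the hyperbolic case), is contractible, and that the action is effective (since $\G$ is a group of isometries), proper (since $\G$ is discrete), and cocompact; hence $[X,\G]\in\fS(\G)$. Two consequences of pseudo-freeness recur below: because $\G$ is discrete, only finite-order elements can fix points (an infinite-order elliptic isometry would generate a non-discrete subgroup), and the fixed set of such an element is convex and discrete, hence a single point; and $X_{sing}$ is $\G$-cocompact, because $X_{sing}=\G(X_{sing}\cap K)$ for any compact $K$ with $\G K=X$, while $X_{sing}\cap K$ is finite.

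\emph{Hypotheses (1), (3), (4).} For (3): by the Bieberbach theorems (Euclidean case) or Selberg's lemma (hyperbolic case), $\G$ has a torsion-free finite-index subgroup $\G_0$, which acts freely, properly, and cocompactly on the contractible $n$-manifold $X$; so $B\G_0$ is a closed aspherical $n$-manifold and $\vcd(\G)=\mathrm{cd}(\G_0)=n>4$. For (1): a nontrivial finite $H\le\G$ fixes a point by the Bruhat--Tits/Cartan fixed-point theorem, necessarily the unique point $x_H\in X_{sing}$; every $g\in C_\G(H)$ carries $X^H=\{x_H\}$ to itself, so $C_\G(H)\le\Stab_\G(x_H)$, which is finite. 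For (4): delete small $\G$-invariant metric balls about the points of $X_{sing}$ and pass to the quotient; since $X_{sing}$ is discrete and $\G$-cocompact, this yields a compact manifold with boundary onto which $X_{free}/\G$ deformation retracts, so $X_{free}/\G$ has the homotopy type of a finite complex.

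\emph{Hypothesis (2), the heart of the proof.} An infinite virtually cyclic $V\le\G$ has no nontrivial finite normal subgroup $F$ --- otherwise $F$ would fix its unique point and force $V$ into a finite stabilizer --- so $V\iso\Z$ or $V\iso D_\infty$, and $\vc-\fbc$ consists exactly of the $D_\infty$-subgroups. Fix $V\iso D_\infty=\langle a,b\mid a^2=b^2=1\rangle$ and put $c:=ab$. Since $aca^{-1}=bcb^{-1}=c^{-1}$, each of $a,b$ preserves and reverses every axis of the hyperbolic isometry $c$. In $\bbH^n$ that axis $\ell$ is unique and contains the fixed points $x_a,x_b$, so $V$ preserves $\ell$; in $\bbE^n$ one has the flat $\mathrm{Min}(c)=F\times\R$ with $c$ translating the $\R$-factor, $V$ preserves this splitting, its induced action on $F$ factors through $D_\infty/\langle c\rangle\iso\Z/2$ to an involution $\phi$, and pseudo-freeness forces $\mathrm{Fix}(\phi)$ to be a single point $f_0$, so $\ell:=\{f_0\}\times\R$ is the unique $V$-invariant geodesic in $X$. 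Then set $H_{max}:=\Stab_\G(\ell)$: by pseudo-freeness no nontrivial element of $\G$ fixes $\ell$ pointwise, so $H_{max}$ acts faithfully and properly on $\ell\iso\R$, hence is a discrete subgroup of $\Isom(\R)$ and, being infinite, is $\iso\Z$ or $D_\infty$ --- in particular virtually cyclic. For maximality and uniqueness: any virtually cyclic $W\supseteq V$ is again infinite with no finite normal subgroup, hence $\iso D_\infty$, and the same construction gives it a unique invariant geodesic $\ell_W$, which is $V$-invariant and therefore equals $\ell$; so $W\le H_{max}$. This is Property $M_{\fbc\subset\vc}$.

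\emph{Hypothesis (5), the obstacle, and the conclusion.} In both cases $\G$ acts properly and cocompactly by isometries on a $\CAT(0)$ space, and in the hyperbolic case $\G$ is moreover word-hyperbolic; the Farrell--Jones conjecture in lower $K$-theory and in $L$-theory is known for such groups by the work of Bartels--Lück, Bartels--Lück--Reich, and Wegner, giving (5). With (1)--(5) in hand, Theorem~\ref{thm:mainQ} delivers the bijection \eqref{eqn:main}. I expect the one genuinely delicate point to be the Euclidean case of Hypothesis (2): unlike in $\bbH^n$, a $D_\infty$-subgroup has no unique axis, so a canonical invariant line must be extracted from the flat $\mathrm{Min}(ab)$ using pseudo-freeness before the stabilizer argument can run --- precisely the bookkeeping carried out for the groups $\Z^n\rtimes_{-1}C_2$ in \cite{CDK}.
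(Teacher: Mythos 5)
Your proof is correct, but it takes a different route from the paper's, which is a one-line reduction: the paper deduces Corollary~\ref{cor:EH} from Corollary~\ref{cor:CAT0} (the $\CAT(0)$ case) together with Selberg's lemma, so that all the real work is done once, uniformly, in the $\CAT(0)$ setting. You instead verify Hypotheses (1)--(5) of Theorem~\ref{thm:mainQ} from scratch for $\bbE^n$ and $\bbH^n$. For (1), (3), (4), (5) your arguments essentially coincide with those in the proof of Corollary~\ref{cor:CAT0} (fixed-point theorem plus finite stabilizers for (1); Selberg/Bieberbach for (3); excising invariant balls about the discrete cocompact singular set for (4), where the local conelike condition that Corollary~\ref{cor:CAT0} takes as a hypothesis is automatic here since isometric stabilizers act linearly near their fixed points; Bartels--L\"uck for (5)). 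The genuine divergence is in Hypothesis~(2): you produce the unique $D_\infty$-invariant line from the axis of $c=ab$ in $\bbH^n$ and from the splitting $\mathrm{Min}(c)\iso F\times\R$ in $\bbE^n$, using pseudo-freeness to collapse $\mathrm{Fix}(\phi)$ to a point. The paper avoids any appeal to $\mathrm{Min}$-sets or semisimplicity: it takes the geodesic segment $\sigma$ joining the fixed points $x_a$ and $x_b$, and shows $D\sigma$ is a geodesic line by observing that the midpoint of the geodesic from $x_a$ to $bx_a$ is $b$-fixed, hence equals $x_b$. That midpoint trick is what lets the paper's argument run verbatim in any $\CAT(0)$ manifold, whereas your version needs the case split you yourself flag as the delicate point; in exchange, your $\mathrm{Min}$-set argument is standard Flat Torus Theorem technology and makes the role of $ab$ more transparent. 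Both are valid, and your maximality/self-containment argument for $\Stab_\G(\ell)$ matches the paper's.
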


\begin{proof}
This will be immediate from Corollary~\ref{cor:CAT0} below and Selberg's lemma.
\end{proof}

Euclidean and hyperbolic spaces fit into a broader class, $\CAT(0)$ spaces (see \cite{BH_CAT0}):

\begin{crl}\label{cor:CAT0}
Let $X$ be a $\CAT(0)$ topological manifold of dimension $n > 4$.
Suppose $\G$ is a virtually torsion-free, locally conelike,
pseudo-free, cocompact discrete proper group of isometries of $X$.
Then the bijection \eqref{eqn:main} holds.
\end{crl}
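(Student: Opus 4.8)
The plan is to deduce Corollary~\ref{cor:CAT0} from the Main Theorem by verifying its five hypotheses for the action $\G \curvearrowright X$, together with the supplementary geometric hypotheses already built into the statement (virtual torsion-freeness, local conelikeness, pseudo-freeness). First I would observe that a $\CAT(0)$ space is contractible (geodesic contraction to any basepoint), so $X$ is a contractible manifold, and cocompactness and properness are assumed, so $[X,\G] \in \fS(\G)$ is a candidate base point. The real content is to check Properties $C_{\{1\}\subset\fin}$, $M_{\fbc\subset\vc}$, $\vcd(\G) = n$, the finiteness of $X_{free}/\G$, and the Farrell--Jones Conjecture.

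The algebraic hypotheses (1) and (2) should follow from the structure of isometry groups of $\CAT(0)$ spaces. For Property $C_{\{1\}\subset\fin}$: a nontrivial finite subgroup $H$ acting by isometries on a $\CAT(0)$ space has a nonempty convex fixed-point set $\mathrm{Fix}(H)$ (the Bruhat--Tits fixed point theorem / circumcenter argument), and $C_\G(H)$ acts on $\mathrm{Fix}(H)$; pseudo-freeness forces $\mathrm{Fix}(H)$ to be a single point (isolated singularity), so $C_\G(H)$ is a discrete cocompact group of isometries fixing a point, hence finite, hence in $\fin$. For Property $M_{\fbc\subset\vc}$: an infinite virtually cyclic subgroup $V$ of a group acting properly cocompactly by isometries on a $\CAT(0)$ manifold either is infinite cyclic / finite-by-$\Z$ (hence in $\fbc$) or is infinite dihedral; in the latter case I would use the flat-strip / axis structure to show $V$ lies in a unique maximal virtually cyclic subgroup, namely the stabilizer of its translation axis (a geodesic line), which is infinite dihedral. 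This is where I expect the main obstacle to lie: pinning down \emph{maximality and uniqueness} of the virtually cyclic overgroup requires controlling all virtually cyclic subgroups sharing a given axis and showing their union is again virtually cyclic and the largest such — this uses the discreteness and properness of the action on the axis and the structure theory of $\CAT(0)$ groups, and is the step most likely to need a careful argument rather than a citation.

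For hypothesis (3), $X$ is an $n$-manifold on which $\G$ acts properly cocompactly, so a torsion-free finite-index subgroup $\G_0$ (which exists by virtual torsion-freeness) acts freely properly cocompactly, making $X/\G_0$ a closed aspherical $n$-manifold with $\pi_1 = \G_0$; hence $\G_0$ has a finite $K(\G_0,1)$ of dimension $n$ and Poincar\'e-duality dimension $n$, so $\vcd(\G) = \mathrm{cd}(\G_0) = n > 4$. For hypothesis (4): the free part $X_{free}$ is the complement in $X$ of the discrete (by pseudo-freeness) singular set $X_{sing}$; local conelikeness gives each singular point a $\G_x$-invariant conelike neighborhood, so $X_{free}/\G$ is the complement of a discrete set of open cones in the (orbifold-type) quotient and is homotopy equivalent to a finite complex — concretely, remove a small $\G$-invariant open neighborhood of $X_{sing}$ to get a compact manifold-with-boundary whose quotient is a finite complex $\G$-deformation retracting $X_{free}/\G$. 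For hypothesis (5), I would invoke the theorem of Bartels--L\"uck (and Bartels--L\"uck--Reich) that groups acting properly cocompactly by isometries on finite-dimensional $\CAT(0)$ spaces satisfy the Farrell--Jones Conjecture in $K$- and $L$-theory; since $\G$ is such a group, (5) holds. With all five hypotheses verified, the bijection~\eqref{eqn:main} is exactly the conclusion of Theorem~\ref{thm:mainQ}, so the corollary follows; the passage to Corollary~\ref{cor:EH} is then immediate since Euclidean and hyperbolic spaces are $\CAT(0)$ manifolds, discrete cocompact isometry groups of them are virtually torsion-free by Selberg's lemma, and pseudo-free isometric actions there are automatically locally conelike (a finite group fixing a point acts orthogonally on the tangent space).
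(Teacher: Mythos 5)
Your overall strategy coincides with the paper's: verify Hypotheses (1)--(5) of Theorem~\ref{thm:mainQ} one by one. Your treatments of (1), (3), (4), and (5) match the paper's proof essentially verbatim (fixed point of a finite group is a single point by pseudo-freeness, hence the centralizer acts properly on a point and is finite; contractibility from unique geodesics; local conelikeness makes $X_{free}/\G$ the interior of a compact $\bdry$-manifold; Bartels--L\"uck for Farrell--Jones). The problem is Hypothesis (2), which you correctly identify as the crux but then leave unproved: saying you ``would use the flat-strip / axis structure'' and that controlling all virtually cyclic subgroups sharing an axis ``is the step most likely to need a careful argument'' is an acknowledgment of the gap, not a closure of it. The difficulty is real: in a general $\CAT(0)$ manifold a hyperbolic isometry such as $ab$ need not have a unique axis (its min-set can contain a flat strip $\ell \times Y$), so ``the stabilizer of its translation axis'' is not well defined until you have singled out one $D$-invariant line, and without uniqueness of that line you cannot conclude that every virtually cyclic overgroup of $D$ lands in the stabilizer of the same line. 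A second, smaller inaccuracy: the dichotomy you assert --- every infinite virtually cyclic subgroup is finite-by-cyclic or literally infinite dihedral --- is false in general (type II virtually cyclic groups are amalgams $A *_C B$ over a finite $C$ of index $2$); the paper derives the $D_\infty$ conclusion from Hypothesis (1), i.e.\ from the centralizer condition forced by pseudo-freeness, citing \cite{CDK2}.

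The paper closes the gap by exploiting pseudo-freeness rather than general $\CAT(0)$ structure theory. Writing $D = \langle a, b \mid a^2 = b^2 = 1\rangle$, each of $a$ and $b$ has a \emph{unique} fixed point ($x$ and $y$ respectively), since its fixed set is nonempty, convex, and contained in the discrete singular set. These points are distinct because $ab$ has infinite order, and the $D$-orbit $D\sigma$ of the geodesic segment $\sigma = [x,y]$ is an embedded closed copy of $\R$. One checks $D\sigma$ is actually geodesic by a midpoint argument ($b$ fixes the midpoint of the geodesic from $x$ to $bx$, which must therefore be $y$), and any $D$-invariant geodesic line must contain the closed line $D\sigma$, hence equal it. This pins down a canonical line $\ell_D$, and then any $D' \in \vc(\G) - \fbc(\G)$ containing $D$ preserves $\ell_{D'} = \ell_D$, so all such $D'$ lie in $\Stab_\G(\ell_D)$, which acts properly on $\ell_D \approx \R$ and is therefore itself virtually cyclic --- giving the unique maximal element required by Property $M_{\fbc \subset \vc}$. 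If you supply this argument (or an equivalent one establishing uniqueness of the invariant line), your proof is complete and agrees with the paper's.
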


This can be viewed as a generalization of \cite[Theorem~A]{BL_CAT0}.
Both rigidity results rely on the truth of the Farrell--Jones Conjecture for these groups, \cite[Theorem~B]{BL_CAT0}.

\begin{proof}
By assumption, Hypothesis~(3) holds.
Since any two points in $X$ are joined by a unique geodesic segment, $X$ is contractible.
Also, since $\G \curvearrowright X$ is locally conelike, the quotient $X_{free}/\G$ is the interior of a compact topological $\bdry$-manifold.
Hence Hypothesis~(4) holds.
By a recent theorem of Bartels--L\"uck \cite{BL_CAT0}, Hypothesis~(5) holds.

Let $H$ be a nontrivial finite subgroup of $\G$.
Since the action $\G \curvearrowright X$ is pseudo-free, the fixed set $X^H$ is a single point.
Note the proper action $\G \curvearrowright X$ restricts to a proper action $C_\G(H) \curvearrowright X^H$.
So $C_\G(H)$ is finite.
Therefore Hypothesis~(1) holds.

Let $D \in \vc(\G) - \fbc(\G)$.
There is a unique $D$-invariant geodesic line $\ell_D \subset X$, as follows.
It follows from Hypothesis~(1), see \cite{CDK2}, that $D$ is isomorphic to the infinite dihedral group, $D_\infty = C_2 * C_2$.
Write $D = \langle a, b \mid a^2 = b^2 = 1 \rangle$.
Let $x$ and $y$ be the unique fixed points in $X$ of $a$ and $b$.
Since $ab$ has infinite order, $x$ and $y$ are distinct, joined by a unique geodesic segment $\sigma \subset X$.
Note that $D\sigma$ is homeomorphic to $\R$ and is a closed subset of $X$.
Suppose $\ell \subset X$ is a $D$-invariant geodesic line.
Then $D\sigma \approx \R$ is a closed subset of $\ell \approx \R$.
Hence $D\sigma = \ell$.
Therefore, any such $\ell$ is unique.

It remains to prove such an $\ell$ exists, that is, the $D$-invariant embedded line $D\sigma \subset X$ is geodesic.
It suffices to show that the segment $\sigma \cup b\sigma \subset X$ joining $x$ and $bx$ is geodesic.
Let $\tau \subset X$ be the unique geodesic segment joining $x$ and $bx$.
Since $b^2=1$ and the action of $b$ is isometric, $\tau$ is $b$-invariant and its midpoint $m$, with respect to the arclength parameterization, is fixed by $b$.
Hence $y=m$ and so $\sigma \cup b\sigma=\tau$ is geodesic.
Therefore $\ell_D := D \sigma$ is the unique $D$-invariant geodesic line in $X$.

If $D' \in \vc(\G) - \fbc(\G)$ satisfies $D \subseteq D'$, then $\ell_{D'}$ is $D$-invariant, hence $\ell_{D'} = \ell_D$, so $D' \subseteq \Stab_\G(\ell_D)$.
Therefore, since $\Stab_\G(\ell_D)$ has a proper isometric action on $\ell_D \approx \R$,
it is the unique maximal virtually cyclic subgroup of $\G$ containing $D$.
Thus Hypothesis~(2) holds.
Now apply Theorem~\ref{thm:mainQ} in order to obtain the bijection~\eqref{eqn:main}.
\end{proof}

\section{Geometric examples}\label{sec:exm}

Indeed, such $\CAT(0)$ examples of $(X,\G)$ exist which cannot be Riemannian.
A natural source for such infinite $\G$ with $2$-torsion are reflection groups of convex polytopes.
Thanks go to Mike Davis for feedback on this exposition and a guide to define $\Gamma$ below.

Let $K$ be an abstract simplicial complex with finite vertex set $S$.
In \cite[Section~1.2]{Davis_Coxeter}, Davis constructs a cubical cell complex $P_K$ and right-angled Coxeter system $(W_K,S)$:
\begin{eqnarray}
\label{eqn:P} P_K &~:=~& \bigcup_{\sigma \in K} [-1,1]^\sigma \times \{-1,1\}^{S-\sigma} ~\subset~ [-1,1]^S\\
W_K &~=~& \left\langle S ~\mid~ \{s^2 = 1\}_{s \in S} ~,~ \{ [s,t]=1 \}_{\{s,t\} \in K} \right\rangle.
\end{eqnarray}
Herein, we use the set-theoretic notation $B^A := \{ \text{functions } f: A \longrightarrow B\}$.

The link of each vertex of $P_K$, hence of each vertex of the universal cover $\wt{P_K}$, is isomorphic to the geometric realization $|K| \subset [0,1]^S$.
There is a cocompact, proper, isometric action $W_K \curvearrowright \widetilde{P_K}$ covering the natural reflection action $W_K \curvearrowright [-1,1]^S$.
From these actions, Davis obtains an identification and an exact sequence of groups:
\begin{equation}\label{eqn:abelianization}\begin{CD}
1 @>>> \pi_1(P_K) = [W_K,W_K] @>>> W_K @>{\varphi}>> \{-1,1\}^S @>>> 1.
\end{CD}
\end{equation}

The \emph{barycentric subdivision} $bK$ is the abstract simplicial complex whose $n$-simplices are all linearly ordered subsets of $K$ of cardinality $n+1$.
A simplicial complex is \emph{flag} if, whenever a finite subset of vertices are pairwise joined by edges, they span a simplex.
Since $bK$ is flag, by \cite[Proposition~1.2.3]{Davis_Coxeter}, the induced metric on $X := \widetilde{P_{bK}}$ is $\CAT(0)$.
Then, since $P_{bK}$ is aspherical, \eqref{eqn:abelianization} implies that $W := W_{bK}$ is virtually torsion-free.

\begin{lmm}\label{lem:gamma}
Let $K$ be an abstract simplicial complex with finite vertex set.
Recall the right-angled Coxeter group $W$ and the cubical complex $X$ defined above.
There is a virtually torsion-free subgroup $\G \trianglelefteq W$ with torsion such that $\G \curvearrowright X$ is pseudo-free.
\end{lmm}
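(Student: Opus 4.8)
The strategy is to let $\G$ be the full preimage, under the abelianization $\varphi\colon W\to\{-1,1\}^{\cV}$ of \eqref{eqn:abelianization} for $W=W_{bK}$ (here $\cV$ denotes the finite vertex set of $bK$), of a carefully chosen subgroup of order two. Recall from \cite[Section~1.2]{Davis_Coxeter} the structure of the proper cocompact action $W\curvearrowright X=\wt{P_{bK}}$: the stabilizer of every point is conjugate to a special subgroup $W_\sigma=\langle\sigma\rangle\cong(\Z/2)^\sigma$, where $\sigma$ ranges over the simplices of $bK$, i.e.\ over the chains in $K$; moreover $\varphi$ restricts to an isomorphism of each $W_\sigma$ onto the coordinate subgroup $\{-1,1\}^\sigma\subset\{-1,1\}^{\cV}$. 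In particular $\varphi$ is injective on every finite subgroup of $W$ --- this is exactly why $\ker\varphi=[W,W]=\pi_1(P_{bK})$ is torsion-free --- and for each nonempty chain $T$ the element $w_T:=\prod_{v\in T}v\in W_T$ has order two, with $\varphi(w_T)=\eta_T$, the sign vector supported exactly on $T$. The map $T\mapsto\eta_T$ is injective.

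Now fix a maximal chain $T_0$ in $K$ and set $A:=\langle\eta_{T_0}\rangle\cong\Z/2$ and $\G:=\varphi^{-1}(A)$. Then $\G\trianglelefteq W$, because $\{-1,1\}^{\cV}$ is abelian; $\G$ is virtually torsion-free because $W$ is (established above); and $\G$ has torsion because $w_{T_0}\in\G$ has order two.

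It remains to verify that $\G\curvearrowright X$ is pseudo-free. Suppose $1\neq\gamma\in\G$ fixes a point of $X$; then $\gamma$ lies in a finite stabilizer, hence has order two and is $W$-conjugate to some $w_T$ with $T$ a nonempty chain, so $\eta_T=\varphi(\gamma)\in A=\{1,\eta_{T_0}\}$; since $\eta_T\neq1$, injectivity of $T\mapsto\eta_T$ gives $T=T_0$. (Taking $A$ one-dimensional is what forces this: $A$ contains no $\eta_T$ for a non-maximal chain $T$, and the $w_T$ with $T$ non-maximal are precisely the finite-order elements whose fixed sets in $X$ are positive-dimensional.) Consequently every such $X^\gamma$ is a $W$-translate of $X^{w_{T_0}}$. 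Finally, realizing a strict fundamental domain for the $W$-action as a subcomplex of the cube $[0,1]^{\cV}$ with mirrors the coordinate faces (via \eqref{eqn:P} and \cite[Section~1.2]{Davis_Coxeter}), one checks that a point of $X$ fixed by $w_{T_0}$ lies over the cube vertex with coordinate $0$ on $T_0$ and $1$ off $T_0$ --- the only point of the domain whose set of vanishing coordinates is a chain containing $T_0$, by maximality of $T_0$. Hence $X^{w_{T_0}}$, and therefore the entire singular set of the $\G$-action, is contained in a single $W$-orbit, which is discrete since the $W$-action is proper. This yields the required $\G$. The one substantive step is the last computation --- using the Davis mirror structure to identify the maximal chains as exactly those $T$ for which $w_T$ has $0$-dimensional fixed set; once that is in hand, the choice of $A$ and the three required properties are routine.
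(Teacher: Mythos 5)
Your proof is correct, but it takes a genuinely different route from the paper's. The paper defines $\G := (\theta\circ\varphi)^{-1}\langle(-1,\ldots,-1)\rangle$, where $\theta: \{-1,1\}^K \to \{-1,1\}^{n+1}$ multiplies the sign coordinates within each dimension class of simplices of $K$; pseudo-freeness is then \emph{pulled back} along a $(\theta\circ\varphi)$-equivariant cubical map $P_{bK} \to P_{\Delta^K} \to P_{\Delta^n} = [-1,1]^{n+1}$ that is injective on each cube, from the visibly pseudo-free antipodal action of $\langle(-1,\ldots,-1)\rangle$ on the cube. You instead take $\G := \varphi^{-1}\langle\eta_{T_0}\rangle$ for a single chosen maximal chain $T_0$, and verify pseudo-freeness by a direct analysis of point stabilizers in the Davis complex: every torsion element of $\G$ with a fixed point must be conjugate to $w_{T_0}$, and maximality of $T_0$ forces its fixed set to be zero-dimensional, contained in one $W$-orbit. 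The two constructions produce different subgroups (the paper's has index $2^{\dim K}$ in $W$, yours has index $2^{|K|-1}$, so yours is considerably smaller), but both satisfy the lemma as stated, and either suffices for the application in Example~\ref{exa:Davis}. The trade-off: the paper's argument needs no input about stabilizers in $\wt{P_{bK}}$ beyond equivariance and cube-injectivity of the projection, and it treats all full chains uniformly rather than privileging one; your argument leans on the standard classification of point stabilizers as conjugates of spherical special subgroups $W_\sigma$ (which is in Davis's book, though not in Section~1.2 alone), but in exchange it localizes the entire singular set into a single discrete $W$-orbit, which is a cleaner geometric picture. One small caution: your parenthetical identification of the non-maximal chains with the torsion elements having positive-dimensional fixed sets is not needed for the proof and is the only place where imprecision could creep in; the load-bearing steps (conjugacy-invariance of $\varphi$, injectivity of $T \mapsto \eta_T$, and the coordinate computation of $X^{w_{T_0}}$ over the fundamental chamber) are all sound.
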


\begin{proof}
Note $bK$ has vertex set $K$. Write $n := \dim K$.
Consider the epimorphism
\[
\theta: \{-1,1\}^K \longrightarrow \{-1,1\}^{n+1} ~;~\quad
f \longmapsto \left(\prod_{\dim\sigma=i} f(\sigma)\right)_{i=0}^n.
\]
Define a normal subgroup
\[
\G ~:=~ (\theta\circ\varphi)^{-1}\langle(-1, \ldots, -1)\rangle ~\trianglelefteq~ W.
\]
Note $\G$ is virtually torsion-free: in fact, \eqref{eqn:abelianization} restricts to an exact sequence
\[\begin{CD}
1 @>>> [W,W] @>>> \G @>{\varphi}>> \theta^{-1}\langle(-1, \ldots, -1)\rangle @>>> 1.
\end{CD}\]

Observe the reflection action $W_{\Delta^n} = \{-1,1\}^{n+1} \curvearrowright [-1,1]^{n+1} = P_{\Delta^n}$ restricts to a pseudo-free action $\langle(-1, \ldots, -1)\rangle \curvearrowright P_{\Delta^n}$.
There is a cubical map $P_{bK} \to P_{\Delta^K} \to P_{\Delta^n}$, induced on $P$-constructions by an inclusion and a projection, that is equivariant with respect to the homomorphism $\theta \circ \varphi: W \longrightarrow W_{\Delta^n}$ and is injective on each cube $[-1,1]^\sigma$.
Then $W \curvearrowright P_{bK}$ restricts to a pseudo-free action $\G \curvearrowright P_{bK}$.
So, since the map $X \longrightarrow P_{bK}$ is $W$-equivariant and is injective on each cube, the action $\Gamma \curvearrowright X$ is pseudo-free.
\end{proof}

%

\begin{exa}\label{exa:Davis}
Now we proceed to specify the exotic $\CAT(0)$ examples $W \curvearrowright X$ of Davis--Januskiewicz, recounted in \cite[Example~10.5.3]{Davis_Coxeter}.
The key feature is that $X$ is a topological manifold of any given dimension $n \geq 7$, but it not simply connected at infinity.
Hence $X$ is a contractible $n$-dimensional manifold, not homeomorphic to $\R^n$.

Let $3 \leq m \leq n - 4$.
Start with a triangulated homology $m$-sphere $M$ with fundamental group $\pi \neq 1$.
(Recall a \emph{homology $m$-sphere} is a closed manifold with the same integral homology groups as $S^m$.)
For example, $M$ can be the Poincar\'e homology 3-sphere.
Write $C$ for the complement of the open star of a vertex in $M$.
Then $C$ is a compact, triangulated $\partial$-manifold of dimension $m$, with the fundamental group $\pi$, and $\bdry C \approx S^{m-1}$.
Thicken $C$ into a compact, triangulated $\partial$-manifold
\[
A := C \times D^{n-m-1} ~\text{ with }~ \bdry A ~\approx~ (C \times S^{n-m-2}) \cup_{(S^{m-1} \times S^{n-m-2})} (S^{m-1} \times D^{n-m-1}).
\]
Note the induced map $\pi_1(\bdry A) \to \pi_1(A) = \pi$ of fundamental groups is an isomorphism.
Furthermore, $\bdry A$ is a homology $(n-2)$-sphere, since $M$ is a homology $m$-sphere.

Define a simply connected homology-manifold $L$ of dimension $n-1$ by
\[
L ~:=~ A \cup_{\bdry A} \mathrm{Cone}(\bdry A).
\]
Observe that $L$ is not a manifold since the link of the cone point $c$ is not a sphere.
Nonetheless, by a theorem of Edwards, the suspension of $L$ is a topological manifold.
More generally, this is true for any triangulated homology-manifold with simply connected links.
Write $K$ for the abstract simplicial complex of $L$.
Consider the cubical Davis complex $X$, right-angled Coxeter system $(W,S)$, and subgroup $\G$ from Lemma~\ref{lem:gamma}.
Each vertex link is $|bK| \approx |K| = L$.
Thus $X$ is a topological manifold.
Therefore, Corollary~\ref{cor:CAT0} calculates $\fS(\G)$.
But, by \cite[Theorem~9.2.2]{Davis_Coxeter}, $\pi_1(L-c)\neq 1$ implies $\pi_1^\infty(X) \neq 1$.
\end{exa}

Finally, the axiomatic formulation of Theorem~\ref{thm:mainQ} is worthwhile;
it removes the reliance on convex geometry in the proof.
Here is a non-convex example to illustrate the axioms; thanks go to David Speyer for pointing it out on \texttt{http://mathoverflow.net}.

\begin{exa}\label{exa:Heisenberg}
For any commutative ring $R$, recall the $R$-Heisenberg group
\[
\Hei(R) ~:=~ \left\{ \begin{pmatrix}1 & x & z\\ 0 & 1 & y\\ 0 & 0 & 1\end{pmatrix} ~\bigg\vert~ x,y,z \in R \right\} ~\subset~ GL(3,R).
\]

Consider the Eisenstein integers $\Z[\omega]$, where $\omega := \exp(2\pi i/3) \in \C$ is a primitive third root of unity.
Also consider the diagonal matrix $D := \mathrm{diag}(1,\omega,\omega^2) \in GL(3,\C)$.
Define a semidirect product $\G = \Hei(\Z[\omega]) \rtimes C_3$, where the $C_3$-action is given by conjugation by $D$ in $GL(3,\C)$.
Take $X = \Hei(\C)$.
Then $\G$ satisfies Hypotheses~(1--5), using a theorem of Bartels--Farrell--L\"uck \cite{BFL_lattice}.
Therefore: $\fS(\G) = \{ [X,\G] \}$, by Theorem~\ref{thm:mainQ}.

Recall the Solvable Subgroup Theorem \cite[II.7.8]{BH_CAT0}:
if a virtually solvable group $\G$ admits a cocompact proper action by isometries on a $\CAT(0)$ space, then $\G$ must be virtually abelian.
However, our group $\G$ is virtually solvable but not virtually abelian.
Therefore our $\G$ cannot act cocompactly and properly by isometries on a $\CAT(0)$ space.
\end{exa}


\end{document}